\newcommand\eps{\epsilon}
\def\eqref#1{equation~\ref{#1}}
\def\1{\bm{1}}
\def\eps{{\epsilon}}
\def\vb{{\bm{b}}}
\def\vx{{\bm{x}}}
\def\mA{{\bm{A}}}
\def\mB{{\bm{B}}}
\def\mI{{\bm{I}}}
\def\mQ{{\bm{Q}}}
\def\mR{{\bm{R}}}
\def\mS{{\bm{S}}}
\def\mU{{\bm{U}}}
\def\mV{{\bm{V}}}
\def\mY{{\bm{Y}}}
\DeclareMathAlphabet{\mathsfit}{\encodingdefault}{\sfdefault}{m}{sl}
\SetMathAlphabet{\mathsfit}{bold}{\encodingdefault}{\sfdefault}{bx}{n}
\def\gG{{\mathcal{G}}}
\newcommand{\R}{\mathbb{R}}
\newcommand{\nnz}{\mathrm{nnz}}
\newcommand{\normltwo}{\ell_2}
\DeclareMathOperator*{\argmin}{arg\,min}
\newtheorem{theorem}{Theorem}
\newtheorem{definition}{Definition}
\title{Sparse graph based sketching  for fast numerical linear algebra}
\name{Dong Hu$^\star$ \:\: Shashanka Ubaru$^\dagger$ \:\: Alex Gittens$^\star$ \:\: Kenneth L.~Clarkson$^\#$ \:\: Lior Horesh$^\dagger$ \:\: Vassilis Kalantzis$^\dagger$}
\address{$^\star$Rensselaer Polytechnic Institute, Troy, NY\\
$^\dagger$ IBM Research, Yorktown Heights, NY. $^\#$IBM Research, San Jose, CA\thanks{Copyright 2021 IEEE. Published in ICASSP 2021 - 2021 IEEE International Conference on Acoustics, Speech and Signal Processing (ICASSP), scheduled for 6-11 June 2021 in Toronto, Ontario, Canada.
Personal use of this material is permitted. However, permission to reprint/republish this material for advertising or promotional purposes or for creating new collective works for resale or redistribution to servers or lists, or to reuse any copyrighted component of this work in other works, must be obtained from the IEEE. Contact: Manager, Copyrights and Permissions / IEEE Service Center / 445 Hoes Lane / P.O. Box 1331 / Piscataway, NJ 08855-1331, USA. Telephone: + Intl. 908-562-3966.}}
\begin{document}
%
\maketitle
\begin{abstract}
In recent years, a variety of randomized constructions of \emph{sketching matrices} have been devised, that have been used in fast algorithms for numerical linear algebra problems, such as least squares regression, low-rank approximation, and the approximation of leverage scores. A key property of sketching matrices is that of \emph{subspace embedding}.
In this paper, we  study sketching matrices that are obtained from bipartite graphs that are sparse, i.e., have left degree~$s$ that is small. In particular, we explore two popular classes of sparse graphs, namely, \emph{expander graphs} and \emph{magical graphs}. For a given subspace $\mathcal{U} \subseteq \R^n$ of dimension $k$, we show that the magical graph with left degree $s=2$ yields a $(1\pm \eps)$ $\normltwo$-subspace embedding for  $\mathcal{U}$, if the number of right vertices (the sketch size) $m = \mathcal{O}({k^2}/{\eps^2})$. The expander graph with $s = \mathcal{O}({\log k}/{\eps})$ yields a subspace embedding for $m = \mathcal{O}({k \log k}/{\eps^2})$.
We also discuss the construction of sparse sketching matrices with reduced randomness using expanders based on error-correcting codes.
Empirical results on various synthetic and real datasets show that these sparse graph sketching matrices work very well in practice.

\end{abstract}
\begin{keywords}
Randomized sketching, expander graphs, low-rank approximation, least squares regression.
\end{keywords}
\section{Introduction}
\label{sec:intro}
Many modern applications involving large-dimensional data resort to data reduction techniques such as matrix approximation~\cite{halko2011finding, ubaru2019sampling} or compression~\cite{sayood2017introduction} 
in order to lower the amount of data processed, and to speed up computations.
In recent years, randomized sketching techniques have attracted much attention in the literature for computing such matrix approximations, due to their high efficiency and well-established theoretical guarantees~\cite{KV17, halko2011finding,woodruff2014sketching}. 
Sketching methods have been used to speed up numerical linear algebra problems such as least squares regression, low-rank approximation, matrix multiplication, and  approximating leverage scores~\cite{sarlos2006improved,clarkson2017low,halko2011finding,woodruff2014sketching,ubaru2017low}. These primitives as well as improved matrix algorithms have been developed based on sketching for various tasks in machine learning~\cite{chatalic2018large,zhang2019incremental}, signal processing~\cite{choi2019large,rahmani2020scalable}, scientific computing~\cite{ubaru2019sampling}, and optimization~\cite{pilanci2017newton,ozaslan2019iterative}.

Given a large matrix $\mA\in \R^{n\times d}$, sketching techniques consider a random ``sketching" matrix $\mS\in \R^{m\times n}$ with $m\ll n$, where the matrix product $\mS\cdot \mA$ is called the sketch of $\mA$. Then, the sketch (smaller matrix) substitutes the original matrix in  applications.
For example, in over-constrained least-squares regression, where design matrix $\mA\in \R^{n\times d}$ has $n\gg d$, and with response vector $\vb\in\R^n$, we wish to solve $\min_{\vx\in\R^d}\|\mA\vx - \vb\|_2$ inexpensively, and obtain an approximate solution $\tilde{\vx}$ such that $\|\mA\tilde{\vx} - \vb\|_2\leq (1+\eps) \|\mA\vx^* - \vb\|_2$, where $\vx^* = (\mA^{\top}\mA)^{-1}\mA^{\top}\vb$ is the exact solution and $\eps$ is an error tolerance. Using sketching, we can obtain a fast approximate solution, by solving
$\tilde{\vx} = \argmin_{\vx\in\R^d}\|\mS\mA\vx - \mS\vb\|_2$.
Similarly, in low-rank approximation, given a large matrix $\mA\in \R^{n\times d}$ and a rank $k$, we wish to compute a rank-$k$ matrix $\tilde{\mA_k}$ for which $\|\mA-\tilde{\mA_k}\| \leq (1+\eps) \|\mA - {\mA_k}\|$, where $\mA_k$ is the best rank-$k$ approximation with respect to the matrix norm. Again, the matrix $\tilde{\mA_k}$ can be computed inexpensively using the sketch $\mS \mA$, see section~\ref{sec:results}. In both  cases, the $(1+\eps)$-approximation error bound is obtained if the sketching matrix $\mS$ is a
\emph{$(1\pm \eps)$ $\normltwo$-subspace embedding}~\cite{sarlos2006improved,woodruff2014sketching} (defined in section~\ref{sec:theory}).

Different variants of sketching matrices have been proposed in the literature~\cite{avron2013sketching,pagh2013compressed,woodruff2014sketching}.
Random matrices with i.i.d.  Gaussian entries~\cite{halko2011finding} and random sign matrices~\cite{clarkson2009numerical} were proposed, which for a rank-$k$ approximation require a sketch size of only $m = \mathcal{O}\big(\frac{k}{\epsilon}\big)$. However, these matrices are dense, require many i.i.d random numbers to be generated, and the cost of computing the  sketch $\mS \mA$ will be $\mathcal{O}(ndm)$. 
 Structured random matrices, like subsampled random Fourier transform  (SRFT), Hadamard  transform  (SRHT) and error-correcting code (ECC) matrices have been used to overcome the randomness and sketch cost issues~\cite{woolfe2008fast,boutsidis2013improved,ubaru2017low}.
 For dense $\mA$, the sketch cost will be  $\mathcal{O}(nd\log(m))$.
 However, the sketch size will need to be $m = \mathcal{O}\big(\frac{k\log k}{\epsilon}\big)$.

 Sparse sketching matrices have also been explored~\cite{clarkson2017low,nelson2013osnap} for sparse input matrices. \emph{Countsketch} matrices~\cite{clarkson2017low}, which has just one nonzero per column of $\mS$, have
 been shown to perform well for sketching. For sparse $\mA$, the sketching cost will be $\mathcal{O}(\nnz(\mA))$, that is, linear in the number of nonzeros of $\mA$.
 But, for a good rank-$k$ approximation, the sketch size will have to be
 $m = \mathcal{O}\big(\frac{k^2}{\epsilon^2}\big)$~\cite{woodruff2014sketching}.
 Applications involving low-rank approximation of large sparse matrices include Latent Semantic Indexing (LSI)~\cite{LSIsaad}, matrix completion problems~\cite{hastie2015matrix}, subspace tracking in signal processing~\cite{yang1995projection} and others~\cite{ubaru2019sampling}.

\begin{figure}[t]

\centering
\includegraphics[width=0.4\textwidth]{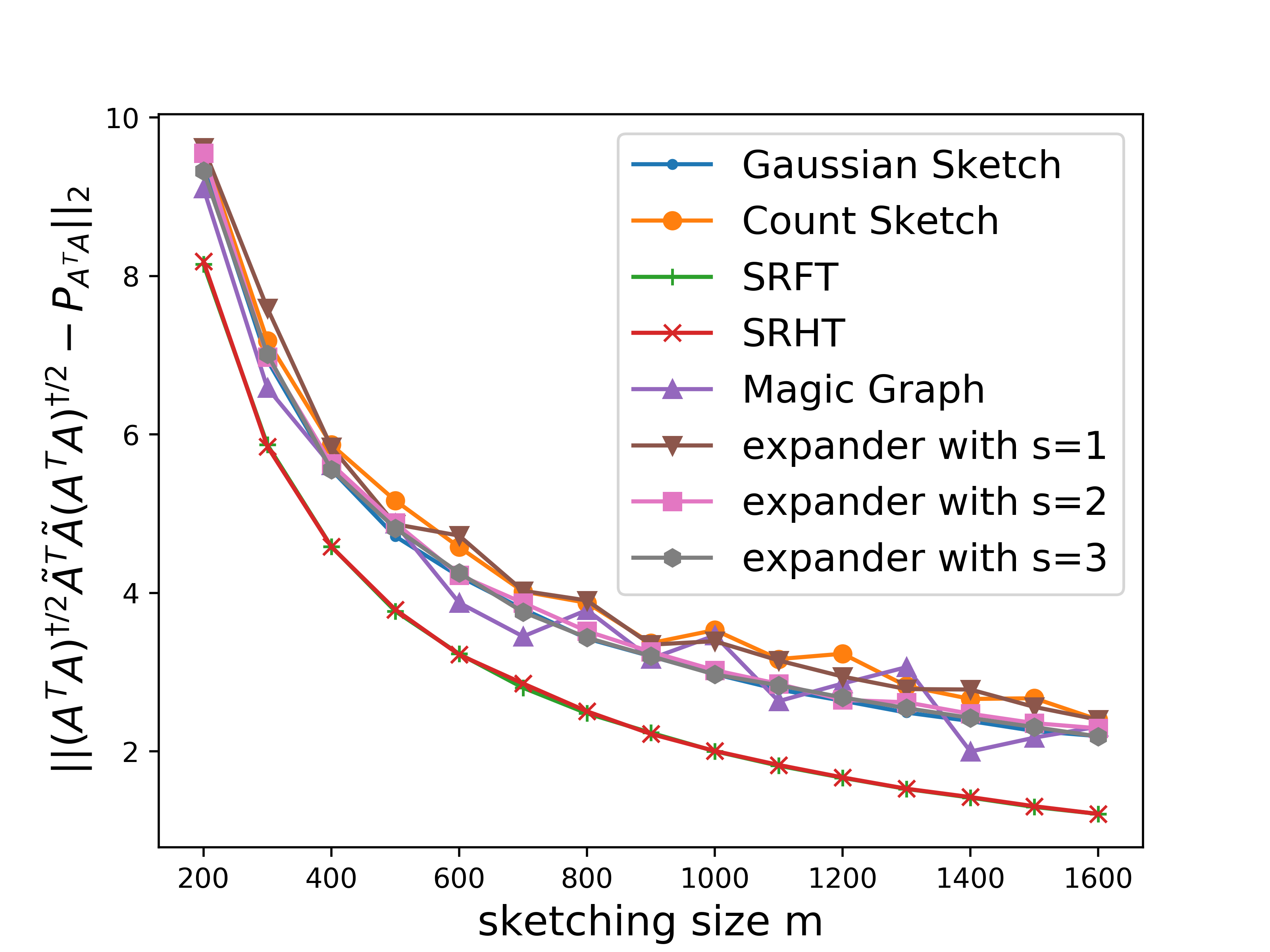}
\caption{Various sketching methods applied on a Gaussian matrix $\mA \in \mathbf R^{4096\times 1000}$ with entries $\mA_{i,j}\sim\mathcal{N}(0,1)$. The distortion (see Sec.~\ref{sec:results}) is plotted as a function of the sketch size $m$. Distortion decreases as $m$ is increased, and all the sketching methods perform similarly on this synthetic data. }\label{fig:1}
\end{figure}

In this study, we explore sparse sketching matrices that are obtained from  bipartite graphs. Our study is motivated in part by the numerical results we obtained while developing a \href{https://github.com/jurohd/Matrix-Sketching-Toolkits}{Matrix Sketching} toolkit. In particular, we observed that a sparse matrix obtained from so-called \emph{magical graphs}, used in~\cite{cheung2013fast} for fast rank estimation, performed very well as a sketching matrix in practice. Figure~\ref{fig:1} presents one such result, where we plot the distortion~\cite{magdon2019fast} (see, section~\ref{sec:results} for definition) between the input matrix $\mA$ and its sketch $\mS\mA$ when we applied different sketching matrices.
We observe that the matrices from sparse graphs such as magical graph and expander graphs perform as well as known sketching matrices. In this paper, we show that such matrices based on sparse graphs are indeed $(1\pm \eps)$ $\normltwo$-subspace embeddings.
Specifically, we present theoretical results which show that the magical graph matrix ($s=2$) of~\cite{cheung2013fast} will be a subspace embedding for
$m = \mathcal{O}\big(\frac{k^2}{\epsilon^2}\big)$. Matrices obtained from expander graphs with degree $s = \mathcal{O}\big(\frac{\log k}{\epsilon}\big)$ will be subspace embeddings for a sketch size $m =\mathcal{O}\big(\frac{k\log k}{\epsilon^2}\big)$. We also discuss how we can construct sparse sketching matrices with reduced randomness using expander graphs based on error correcting codes~\cite{guruswami2009unbalanced}. These can be viewed as alternate constructions for the sparse sketch matrices with limited randomness discussed in~\cite{nelson2013osnap}. Finally, we present numerical results that illustrate the performance of the various sketching matrices for low-rank approximation.

\section{Sparse Graphs}
First, we describe a class of sparse graphs known as \emph{expander graphs}, and variants called magical graphs.
\begin{definition}[Expander Graph]
A bipartite graph $\gG= (L, R;E)$  with $|L| = n,|R|=m$, and regular left degree $s$,
is a $(k, 1-\eps)$-expander graph if for any $C\subseteq L$ with $|C|\leq k$, the set of neighbors $\Gamma(C)$ of $C$ has size $|\Gamma(C)| > (1-\eps)s |C|$.
\end{definition}
Figure~\ref{fig:exp} gives an illustration of an expander graph. These expander graphs have been used in a number of applications including coding theory~\cite{sipser1996expander}, compressed sensing~\cite{jafarpour2009efficient},  group testing~\cite{cheraghchi2010derandomization}, multi-label classification~\cite{ubaru2017multilabel}, and others~\cite{hoory2006expander}. 
A number of random and deterministic constructions of expander graphs exist.
A uniformly random bipartite graph is an expander graph with high probability when $s = O\left(\frac{\log n/k}{\epsilon}\right)$ and $m =O\left(\frac{k\log (n/k)}{\epsilon^2}\right)$~\cite{hoory2006expander,cheraghchi2010derandomization,ubaru2017multilabel}. That is, for each left vertex, we choose $s$ right vertices uniformly and  independently at random.

\begin{figure}[tb]

\centering
\includegraphics[width=0.4\textwidth,trim={5cm 2cm 5cm 3cm}]{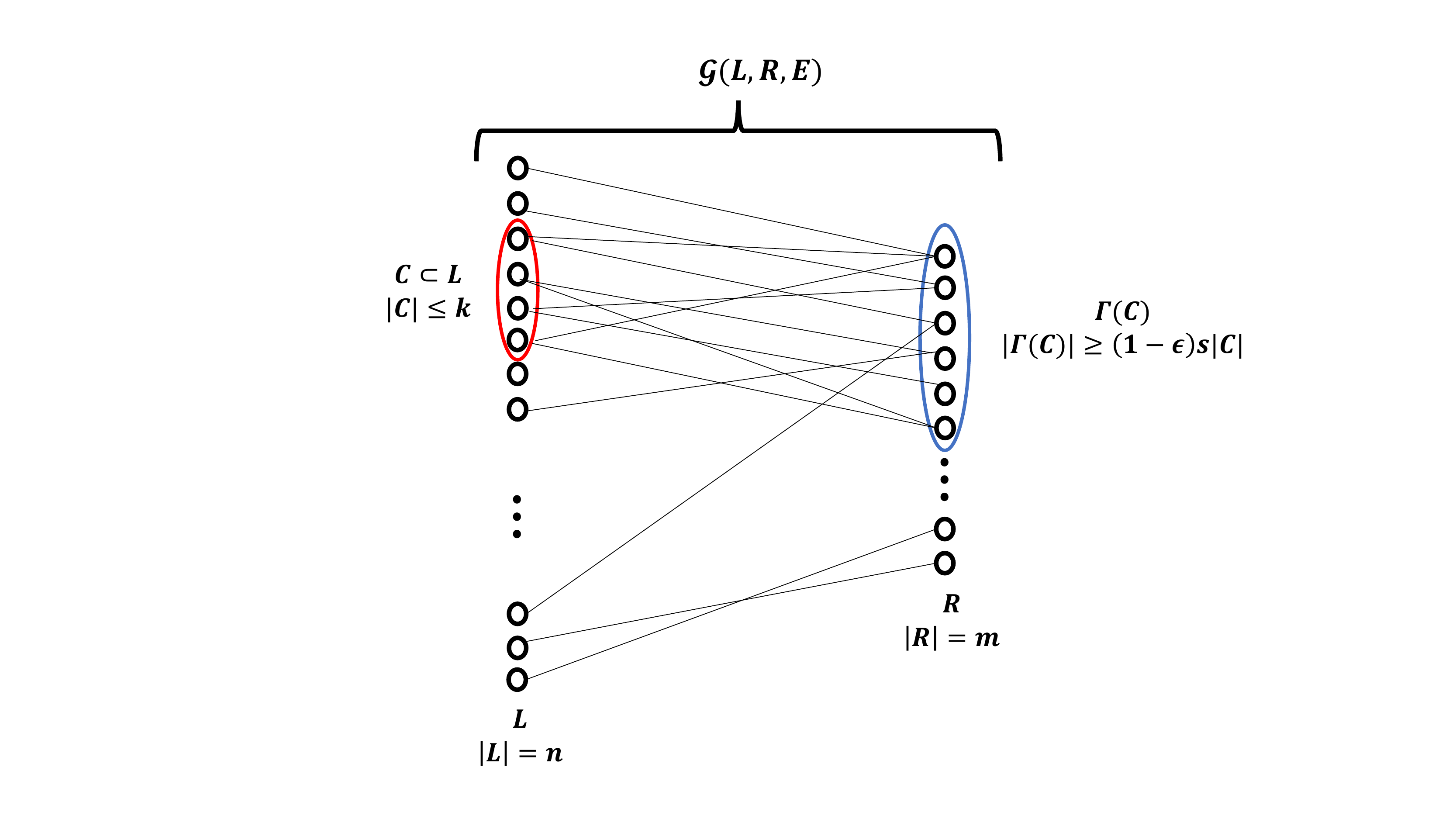}
\caption{$(k,1-\eps)$ -Expander graph}\label{fig:exp}
\end{figure}
Explicit constructions of expander graphs are also known~\cite{capalbo2002randomness,reingold2000entropy}, and there are many constructions of expander graphs based on error-correcting codes~\cite{guruswami2009unbalanced,alon1994random,cheraghchi2010derandomization}. Such constructions can yield sparse sketching matrices with reduced randomness.
A few constructions of subspace-embedding sketching matrices based on error-correcting codes are known~\cite{ubaru2017low}.

Magical graphs are  variants of expander graphs that have been used to construct super-concentrators~\cite{hoory2006expander}. Cheung et.~al~\cite{cheung2013fast} showed that such graphs (defined below) can be used to develop fast algorithms for matrix rank estimation. 

\begin{definition}[Magical Graph]
A random bipartite graph $\gG= (L, R;E)$ with $|L| = n,|R|=m$ is $(k, \delta)$-magical if for every $C\subseteq L$ with $|C|=k$, the probability that there is a matching in $\gG$ in which every vertex in $C$ is matched is at least $1 - \delta$.
\end{definition}
Article~\cite{cheung2013fast} also presented a random bipartite graph construction with $s=2$ that is $(k, O(1/k))$-magical, when $m\geq 11k$. We can show that this construction is also  a $(k, 1-\eps)$-expander, if $m = \mathcal{O}(k^{1/\eps})$.

Here we consider the incidence/adjacency matrix $\mS$ corresponding to such a bipartite graph as our sketching matrix, with the nonzero entries (edges) randomly set to $\pm 1/\sqrt{s}$. Note that, with the sketch matrix based on an expander graph,  the sketch $\mS\cdot \mA$ can be computed in $\mathcal{O}(\nnz(\mA)\log k/\eps)$ time, and with the magical graph ($s=2$), $\mS\cdot \mA$ can be computed in $\mathcal{O}(\nnz(\mA))$ time. 

\section{Theory}\label{sec:theory}

In this section, we present the theoretical results that show that the sketching matrices obtained from sparse graphs such as expanders are 
subspace embeddings under certain conditions. First, we give the definition of subspace embedding.
We use the notation $a= (1\pm \eps)b$ to mean $a\in[(1-\eps)b,(1 +\eps)b]$. 
\begin{definition}[Subspace embedding] A sketching matrix $\mS\in \R^{m\times n}$ is a $(1\pm \eps)$ $\normltwo$-subspace embedding for a subspace $\mathcal{U} \subseteq \R^n$ of dimension $k$, if for an orthonormal basis $\mU\in\R^{n\times k}$ of  $\mathcal{U}$ and for all $\vx\in \R^k$, we have
\begin{equation}
    \|\mS\mU\vx\|^2_2 = (1\pm \eps) \|\mU\vx\|_2^2 = (1\pm \eps) \|\vx\|_2^2.
\end{equation}
\end{definition}
This means that $\mS$ preserves the norm of any vectors from the $k$-dimensional subspace. It also implies the singular values of $\mS\mU$ are bounded  as $|1 - \sigma_i(\mS\mU)| \leq \eps\quad i\in[1,\ldots,k] $.

\begin{theorem}[Magical graph]
A sketching matrix $\mS\in \R^{m\times n}$ obtained from a random bipartite graph with $s =2$ is $(1\pm \eps)$ $\normltwo$-subspace embedding with probability $1-\delta$ if $m = O\left(\frac{k^2}{\delta \eps^2}  \right)$.
\end{theorem}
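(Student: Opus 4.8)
The plan is to run the standard second-moment (Markov) argument for subspace embeddings. Write $\mR := \mS^\top\mS - \mI_n$; since $\mU^\top\mU = \mI_k$, the requirement $\|\mS\mU\vx\|_2^2 = (1\pm\eps)\|\vx\|_2^2$ for all $\vx\in\R^k$ is equivalent to $\|\mU^\top\mR\mU\|_2 \le \eps$, because $\|\mS\mU\vx\|_2^2 - \|\vx\|_2^2 = \vx^\top(\mU^\top\mR\mU)\vx$ and $|\vx^\top\mM\vx| \le \|\mM\|_2\|\vx\|_2^2$. So it suffices to control $\|\mU^\top\mR\mU\|_2$ with probability $1-\delta$. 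We model $\mS$ as in the $s=2$ magical-graph construction: independently for each column $j$ pick two distinct rows uniformly at random and put independent signs $\pm1/\sqrt2$ there. Then the diagonal of $\mS^\top\mS$ is \emph{exactly} $\mI_n$ (each column contributes $(1/\sqrt2)^2+(1/\sqrt2)^2 = 1$), so $\mR$ is off-diagonal with $\mR_{ij} = \tfrac12\sum_{r\in\Gamma(i)\cap\Gamma(j)} g_{ir}g_{jr}$ for $i\ne j$, where the $g$'s are the (independent Rademacher) edge signs; hence $\E[\mS^\top\mS] = \mI_n$ and $\E[\mR] = 0$. Since $\|\cdot\|_2 \le \|\cdot\|_F$, by Markov it is enough to show $\E\|\mU^\top\mR\mU\|_F^2 \le \delta\eps^2$.

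The core computation is $\E\|\mU^\top\mR\mU\|_F^2 = \sum_{a,b\in[k]}\,\sum_{i\ne j}\,\sum_{i'\ne j'}\mU_{ia}\mU_{jb}\mU_{i'a}\mU_{j'b}\,\E[\mR_{ij}\mR_{i'j'}]$. Each $\E[\mR_{ij}\mR_{i'j'}]$ is, after expanding over the shared rows, an average of products of four edge signs $g_{ir}g_{jr}g_{i'r'}g_{j'r'}$; since distinct edges carry independent mean-zero signs, this vanishes unless every edge among $(i,r),(j,r),(i',r'),(j',r')$ occurs an even number of times, and because $i\ne j$ and $i'\ne j'$ that forces $r = r'$ and $\{i,j\} = \{i',j'\}$. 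Thus only $(i',j')\in\{(i,j),(j,i)\}$ contribute, and there $\E[\mR_{ij}\mR_{i'j'}] = \E[\mR_{ij}^2] = \tfrac14\,\E\,|\Gamma(i)\cap\Gamma(j)| = \tfrac14\cdot m\cdot(2/m)^2 = 1/m$ by independence of the columns. Substituting, the $(i',j') = (i,j)$ part is $\tfrac1m\sum_{i\ne j}\ell_i\ell_j \le \tfrac1m(\sum_i\ell_i)^2 = k^2/m$, where $\ell_i := \|\mU_{i,:}\|_2^2$ are the leverage scores (so $\sum_i\ell_i = k$), and the $(i',j') = (j,i)$ part is $\tfrac1m\sum_{i\ne j}(\mU\mU^\top)_{ij}^2 \le \tfrac1m\|\mU\mU^\top\|_F^2 = \tfrac1m\Tr(\mU\mU^\top) = k/m$. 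Hence $\E\|\mU^\top\mR\mU\|_F^2 \le 2k^2/m$.

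To conclude, Markov's inequality gives $\Pr[\|\mU^\top\mR\mU\|_2 > \eps] \le \Pr[\|\mU^\top\mR\mU\|_F^2 > \eps^2] \le 2k^2/(m\eps^2) \le \delta$ as soon as $m \ge 2k^2/(\delta\eps^2)$, i.e.\ $m = O(k^2/(\delta\eps^2))$. The step I expect to demand the most care is the fourth-moment bookkeeping in the middle paragraph: correctly identifying which index patterns $(i,j,i',j',r,r')$ survive the expectation over the edge signs and over the random neighborhoods, and then recognizing the two surviving double sums as $(\sum_i\ell_i)^2 = k^2$ and $\|\mU\mU^\top\|_F^2 = \Tr(\mU\mU^\top) = k$. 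A secondary item to pin down is the exact probabilistic model of the $s=2$ magical graph — in particular whether a column's two neighbors are sampled with or without replacement, and whether its two incident edges are correlated — but the estimate above is insensitive to this provided each column's two edge signs are independent Rademachers and each edge is (essentially) uniform over the $m$ rows, so I would fix the model explicitly up front.
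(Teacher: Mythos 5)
Your proof is correct, and it reaches the same $m = O\bigl(k^2/(\delta\eps^2)\bigr)$ bound by the same underlying statistical mechanism (a second moment plus Markov, funneled through the Frobenius norm), but you carry it out by a genuinely different, self-contained route. The paper instead proceeds modularly: it first verifies the $(\eps_0,\delta,2)$-JL moment property $\E\bigl|\|\mS\vx\|_2^2-1\bigr|^2\le\eps_0^2\delta$ for the $s=2$ graph by specializing the moment bound of Kane--Nelson \cite{kane2014sparser} (their Theorem 15 with $\rho=2$, giving $m=O(1/(\delta\eps_0^2))$), and then invokes the approximate-matrix-multiplication consequence of that property (\cite[Thm.~13]{woodruff2014sketching}) with $\mA=\mB=\mU$ and accuracy $\eps_0/k$, which bounds $\|\mI_k-\mU^\top\mS^\top\mS\mU\|_F$ and hence the spectral norm; the $k^2$ in the sketch size enters through the rescaling $\eps=\eps_0/k$ against $\|\mU\|_F^2=k$. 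You instead fix the $s=2$ model explicitly, observe that $\mS^\top\mS$ has exact unit diagonal, and compute $\E\|\mU^\top(\mS^\top\mS-\mI_n)\mU\|_F^2$ by hand, identifying the two surviving fourth-moment terms as the leverage-score sum $\tfrac1m\sum_{i\ne j}\ell_i\ell_j\le k^2/m$ and the projection term $\tfrac1m\sum_{i\ne j}(\mU\mU^\top)_{ij}^2\le k/m$. What the paper's route buys is generality and brevity (the cited moment bound covers all $s$, and the AMM step is black-box); what yours buys is an explicit constant ($m\ge 2k^2/(\delta\eps^2)$ suffices), no reliance on the hash-function model of \cite{kane2014sparser}, and a transparent view of where the $k^2$ comes from. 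Your flagged caveat about the exact model is the right one to flag and is harmless: the magical-graph construction of \cite{cheung2013fast} (one neighbor in each half of the right vertex set) still has independent columns, independent Rademacher edge signs, and $\Pr[r\in\Gamma(i)]=2/m$ for every row $r$, which is all your calculation of $\E[\mR_{ij}^2]=1/m$ uses.
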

\begin{proof}
We briefly sketch the proof here. First, we consider the JL moment property defined in~\cite{kane2014sparser}. A sketch matrix 
$\mS\in \R^{m\times n}$  has the $(\eps,\delta,\rho)$-JL moment property, if for all $\vx\in \R^n$, with $\|\vx\|_2 =1$, we have
\[
\mathbb{E}|\|\mS\vx\|_2^2 - 1|^{\rho}\leq \eps^{\rho}\delta.
\]
We next show that our graph sketch matrix $\mS$ with $s=2$ satisfies  the$(\eps_0, \delta,2)$-JL moment property for $m = O\left(\frac{1}{\delta \eps_0^2}  \right)$.
For this, we adapt the proof of \cite[Theorem 15]{kane2014sparser}, where the $(\eps,\delta,\rho)$-JL moment property is proved for a random bipartite graph (where the edges for each left vertex and the signs $\pm1$ are assigned randomly via. a hash function) with $m$ right vertices and left degree $s$. From that proof, by setting $Y= \|\mS\vx\|_2^2 - 1$, we have
\[
\mathbb{E}|Y^{\rho}|\leq \frac{1}{s^{\rho}}\cdot 2^{\mathcal{O}(\rho)}\cdot \sum_{q=1}^{\rho/2} \rho^{\rho}\cdot \left(\frac{s^2}{qm}\right)^q.
\]
For the magical graph of~\cite{cheung2013fast}, we have $s = 2$, and if we set $\rho = 2$ and $m=\frac{16}{\delta \eps_0^2}  $, we obtain the $(\eps_0, \delta,2)$-JL moment property.
We then use the proof of \cite[Theorem 9]{woodruff2014sketching} to establish the subspace embedding property. In particular, \cite[Thm 13]{woodruff2014sketching} shows any sketch matrix $\mS$ with  $(\eps,\delta,\rho)$-JL moment property also satisfies, for many matrices  $\mA,\mB$, 
\[
Pr[\|\mA^{\top}\mB - \mA^{\top}\mS^{\top}\mS\mB\|_F>\eps \|\mA\|_F\|\mB\|_F]\leq \delta.
\]
By setting, the two matrices to be $\mU$ and $\eps = \eps_0/k$, we get the subspace embedding property.
\end{proof}
These results and the sketch size order  match that of Countsketch~\cite{clarkson2017low} that has $s=1$. However, in practice, we note that sparse graph sketch matrices with $s=2$ perform slightly better than Countsketch, see section~\ref{sec:results}.
Our next result shows that the expander graphs can achieve improved sketch sizes.

\begin{theorem}[Expander]
A sketching matrix $\mS\in \R^{m\times n}$ obtained from an expander graph with $s = O\left(\frac{\log (k/\delta\eps)}{\eps}\right)$ is $(1\pm \eps)$ $\normltwo$-subspace embedding with probability $1-\delta$ if $m = O\left(\frac{k \log(k/\delta\eps)}{ \eps^2}  \right)$.
\end{theorem}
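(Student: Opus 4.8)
The plan is to mirror the structure of the magical-graph proof but exploit the stronger combinatorial guarantee of expanders, which should let us pay only a polynomial-in-$k$ price in the sketch size rather than a $k^2$ one. First I would establish that a sketching matrix $\mS$ drawn from a $(k,1-\eps')$-expander with left degree $s$, with nonzero entries set to $\pm 1/\sqrt{s}$, satisfies a higher-moment version of the JL property --- namely the $(\eps_0,\delta,\rho)$-JL moment property for a suitable $\rho$ growing like $\log(k/\delta)$. The key quantitative input is the moment bound quoted in the magical-graph proof,
\[
\E|Y|^{\rho} \le \frac{1}{s^{\rho}}\cdot 2^{\mathcal{O}(\rho)}\cdot \sum_{q=1}^{\rho/2}\rho^{\rho}\cdot\left(\frac{s^2}{qm}\right)^q,
\]
for $Y=\|\mS\vx\|_2^2-1$; I would feed in $s = \Theta(\log(k/\delta\eps)/\eps)$ and $m=\Theta(k\log(k/\delta\eps)/\eps^2)$ and verify that the dominant term is controlled, because then $s^2/m = \Theta(\eps^{-2}\log/\ (k\log/\eps^2)) = \Theta(1/k)$, so each summand is roughly $(\rho/\sqrt{s})^{\rho}(1/qk)^q$ and with $\rho=\Theta(\log(k/\delta))$ the sum is at most $\eps_0^{\rho}\delta$ after a short geometric-series estimate.

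Second, I would convert the JL moment property into the subspace embedding guarantee exactly as in the magical-graph theorem: apply \cite[Thm 13]{woodruff2014sketching}, which says that any $\mS$ with the $(\eps,\delta,\rho)$-JL moment property satisfies $\Pr[\|\mA^\top\mB - \mA^\top\mS^\top\mS\mB\|_F > \eps\|\mA\|_F\|\mB\|_F]\le\delta$ for fixed $\mA,\mB$; setting $\mA=\mB=\mU$ with $\mU$ an orthonormal basis of $\mathcal{U}$, so that $\|\mU\|_F=\sqrt{k}$, and taking the JL parameter to be $\eps_0 = \eps/k$ forces $\|\mU^\top\mS^\top\mS\mU - \mI_k\|_F\le\eps$, which gives $|1-\sigma_i(\mS\mU)|\le\eps$ and hence the $(1\pm\eps)$ $\normltwo$-subspace embedding. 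Here is where the improvement over the magical graph appears: because we get to choose $\rho$ as large as $\Theta(\log(k/\delta\eps))$ (the expander's neighbor-expansion supports high-moment concentration, whereas $s=2$ only supports $\rho=2$), the cost of shrinking $\eps_0$ to $\eps/k$ enters only logarithmically rather than as a factor of $k$, so $m$ stays linear in $k$ up to a log.

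The main obstacle I anticipate is making the expansion property actually enter the moment bound correctly --- the displayed bound in \cite{kane2014sparser} is stated for a purely random bipartite graph with a random hash assignment, and one must check that a (possibly explicit) $(k,1-\eps')$-expander with $\pm 1/\sqrt{s}$ signs satisfies the same moment estimate, or at least a version with constants degraded by the expansion parameter $\eps'$. Concretely, the combinatorial sum counting "collision patterns" in the $\rho$-th moment expansion is what the expander condition must bound: one needs that for any support of size at most $k$, not too many of the $s$ edges from each vertex land in the same right vertex, and the neighbor-expansion $|\Gamma(C)| > (1-\eps')s|C|$ is precisely the statement that limits such collisions. I would therefore spend the bulk of the proof re-deriving (or citing carefully) the moment inequality in a form where the relevant $q$-th term carries a factor like $(\eps' s^2/(qm))^q$ or uses the expansion to bound the number of length-$q$ closed walks in the collision hypergraph, then choose $\eps'$ a small constant and optimize $\rho$, $s$, $m$ jointly. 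A secondary, more routine obstacle is bookkeeping the constants so that the final bound reads cleanly as $s=O(\log(k/\delta\eps)/\eps)$ and $m=O(k\log(k/\delta\eps)/\eps^2)$ with the claimed failure probability $\delta$.
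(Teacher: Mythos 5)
Your step 2 contains a genuine gap, and it is the step that carries all the weight. You propose to reach the subspace embedding through the Frobenius-norm approximate-matrix-multiplication result with $\mA=\mB=\mU$, which forces the JL moment parameter down to $\eps_0=\eps/k$, and you claim that taking $\rho=\Theta(\log(k/\delta))$ makes this shrinkage cost only logarithmic in $m$. That cannot work: for any of these sketches the variance of $\|\mS\vx\|_2^2$ is $\Theta(1/m)$, so by Jensen $\E|Y|^{\rho}\ge(\E Y^2)^{\rho/2}=\Omega(m^{-\rho/2})$, and the $(\eps_0,\delta,\rho)$-JL moment property therefore forces $m=\Omega\bigl(\delta^{-2/\rho}\eps_0^{-2}\bigr)=\Omega(k^2/\eps^2)$ once $\eps_0=\eps/k$, no matter how large $\rho$ is. You can see the same failure concretely in the Kane--Nelson bound you quote: with $s=\Theta(\log(k/\delta\eps)/\eps)$ and $m=\Theta(k\log(k/\delta\eps)/\eps^2)$ the $q=1$ term alone is about $2^{\mathcal{O}(\rho)}(\rho/s)^{\rho}\,s^2/m\approx(C\eps)^{\rho}\log(k)/k$, which is vastly larger than the required $(\eps/k)^{\rho}\delta$. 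Raising $\rho$ only improves the dependence on $\delta$, not on $\eps_0$; the Frobenius AMM route with $\eps_0=\eps/k$ is intrinsically the $k^2/\eps^2$ argument used for CountSketch and for the magical graph, and it cannot yield $m$ linear in $k$.

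To get $m=O(k\log(k/\delta\eps)/\eps^2)$ one must control the operator norm of $\mU^{\top}\mS^{\top}\mS\mU-\mI$ directly rather than its Frobenius norm, and this is what the paper does: it first uses Kane--Nelson to conclude that the hash-based random bipartite graph sketch with $s=\mathcal{O}(\log(1/\delta')/\eps)$ and $m=\mathcal{O}(\log(1/\delta')/\eps^2)$ is a JLT (a tail bound, not a moment property at scale $\eps/k$), and then invokes Sarlos's Corollary 11, which converts any JLT with $m=O(k\log(k/\delta\eps)/\eps^2)$ into the singular-value bound $|1-\sigma_i(\mS\mU)|\le\eps$ for all $i$ with probability $1-\delta$ (the Nelson--Nguy\~{e}n OSNAP operator-norm moment argument is an alternative). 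Your proposal is missing this JLT-plus-Sarlos (or net/operator-norm) step entirely. A secondary remark: the ``main obstacle'' you planned to spend most of the proof on --- transferring the moment bound to a fixed $(k,1-\eps')$-expander via the neighbor-expansion property --- is not part of the paper's argument, which simply works with the random hash-based bipartite graph (an expander with high probability); note also that expansion of sets of size at most $k$ constrains collisions only for sparse supports, whereas the JL/JLT property must hold for arbitrary dense unit vectors, so that route would need a substantially different justification in any case.
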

\begin{proof}
We first use  the result from~\cite{kane2014sparser} to show that the graph sketch matrix $\mS$ is JLT. A sketch matrix is JLT$(\eps,\delta)$, if for any $\vx\in \R^n$, with $\|\vx\|_2 =1$, we have
\[
Pr[|\|\mS\vx\|_2^2 - 1|>\eps]<\delta.
\]
Results in~\cite{kane2014sparser} show that  sketch matrix from a random bipartite graph (where again the edges for each left vertex and the signs $\pm1$ are assigned randomly via. a hash function) with degree $s = \mathcal{O}(\log(1/\delta)/\eps)$ and $m = \mathcal{O}(\log(1/\delta)/\eps^2)$ is JLT. Note that, such random bipartite graphs are also expanders with appropriate parameters~\cite{hoory2006expander}.
We next use the result by   Sarlos  in~\cite[Corollary 11]{sarlos2006improved}, that showed that any JLT with $m = O\left(\frac{k \log(k/\delta\eps)}{ \eps^2}  \right)$ satisfies
\[
Pr[\forall i \in [1..k]: |1- \sigma_i(\mS\mU)|\leq \eps]\geq1-\delta.
\]
A similar result was proved using a different approach in~\cite{nelson2013osnap} for sparse sketch matrices.
 \end{proof}
 
Using the relations in~\cite{nelson2013osnap,kane2014sparser}, we can show that the above results for expanders hold even when the random signs $\pm$ assignment is just $\mathcal{O}(\log k)$-wise independent, as well as the $s$ edges assignment for each left vertex (the column permutations) can also be $\mathcal{O}(\log k)$-wise independent. This implies, we can use the code based expanders~\cite{guruswami2009unbalanced,alon1994random,cheraghchi2010derandomization}
to construct sparse sketching  matrices with reduced randomness (almost deterministic). A code matrix obtained from a coding scheme with dual distance $\gamma$ will have columns that are $\gamma$-wise independent~\cite{ubaru2017low} (the $\gamma$-wise independence property of codes). Hence, an expander with the above $s,m$ values constructed from a code with dual distance $\mathcal{O}(\log k)$ will yield a sparse sketching matrix that is subspace embedding.

\begin{figure*}[t]

\includegraphics[width=0.31\textwidth]{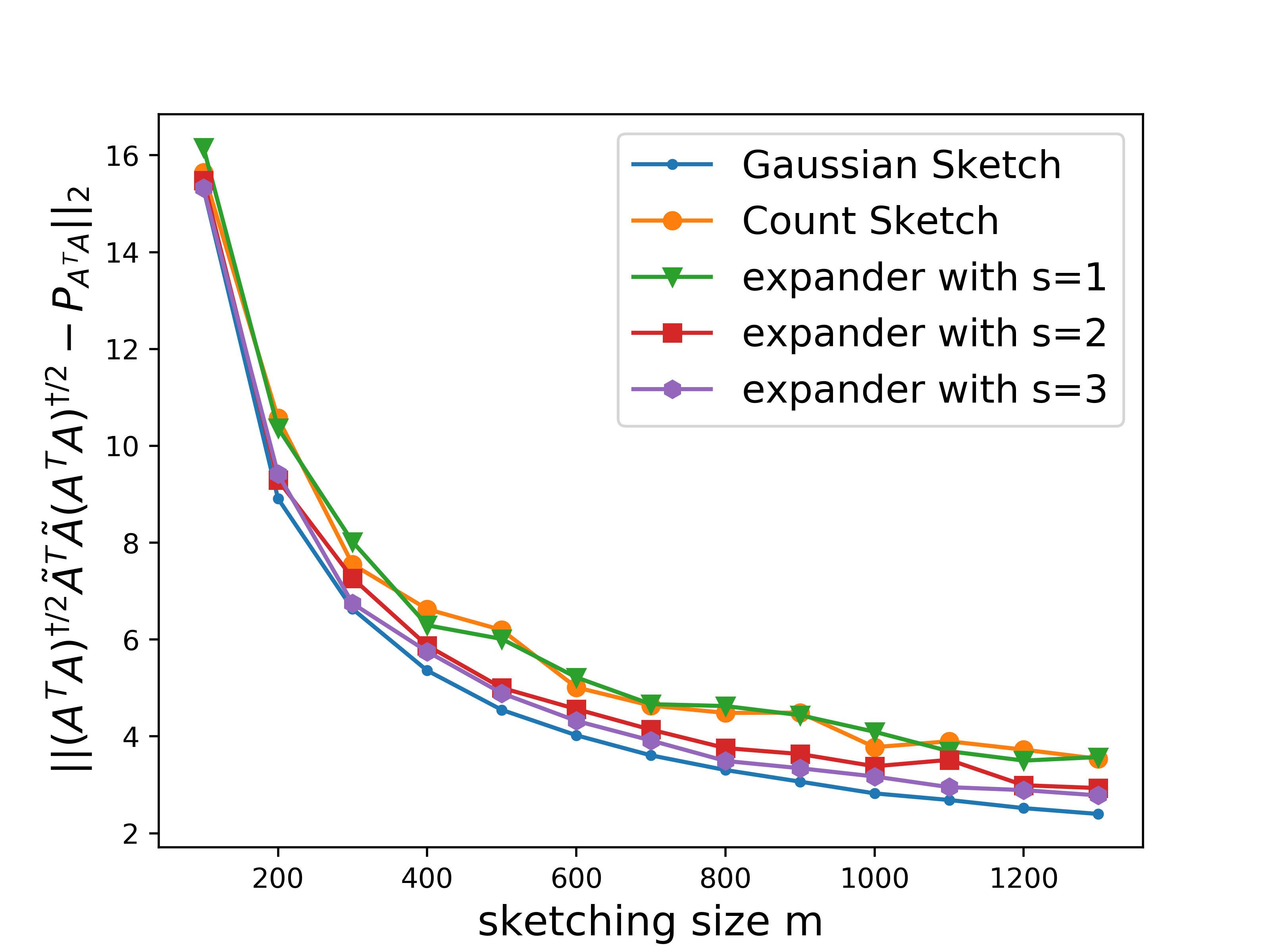}
\includegraphics[width=0.31\textwidth]{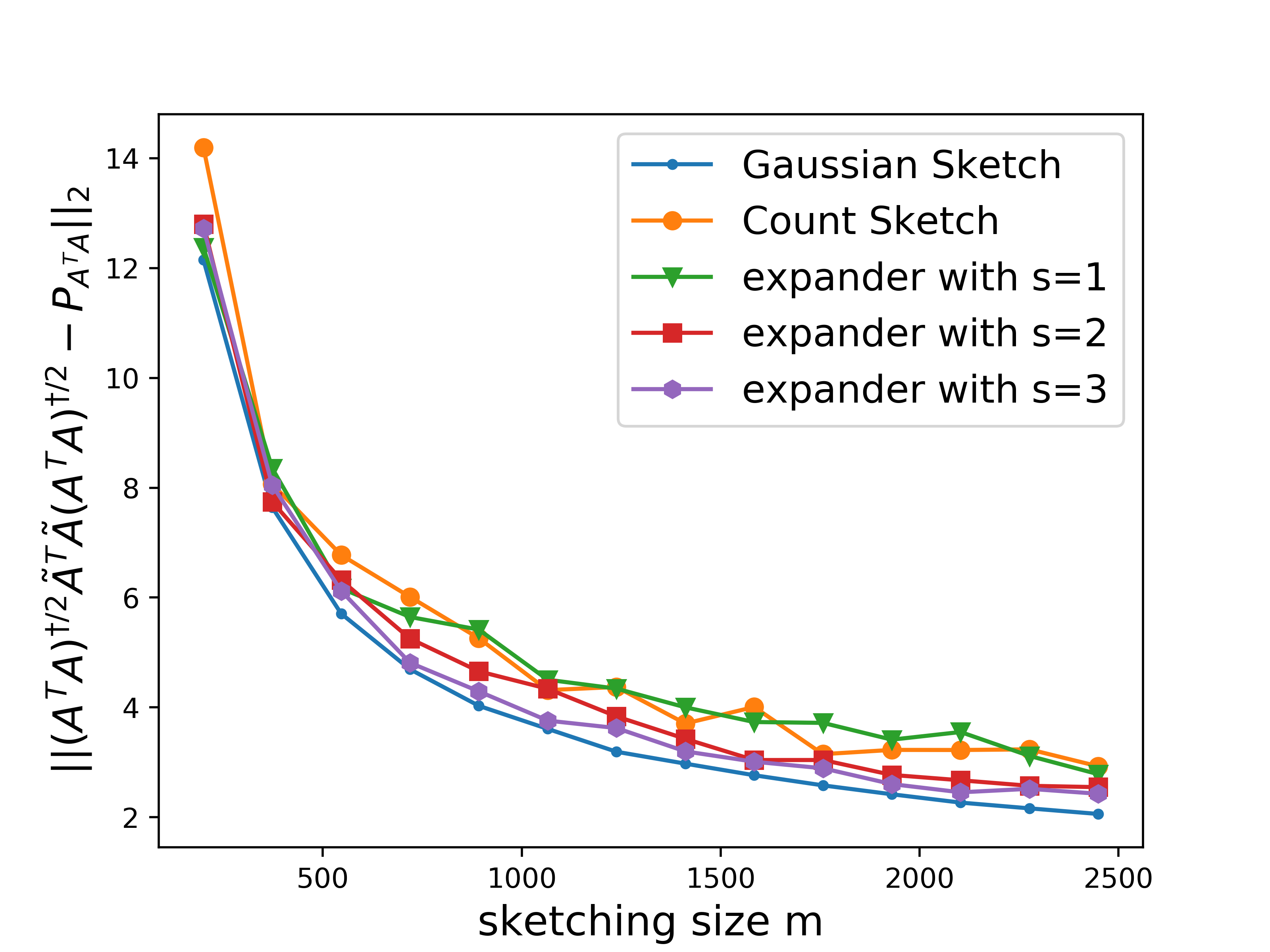}
\includegraphics[width=0.31\textwidth]{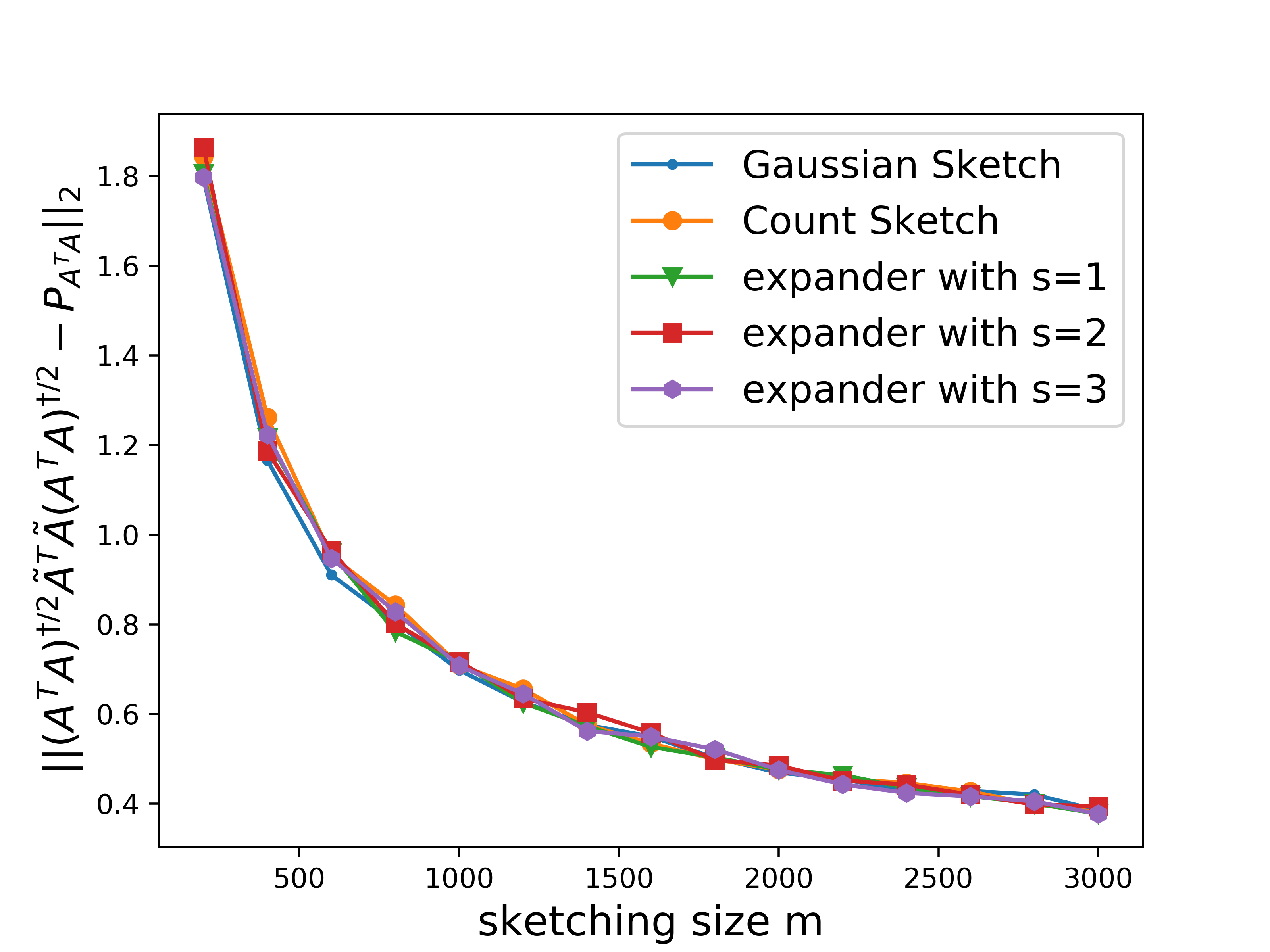}\\
\includegraphics[width=0.31\textwidth]{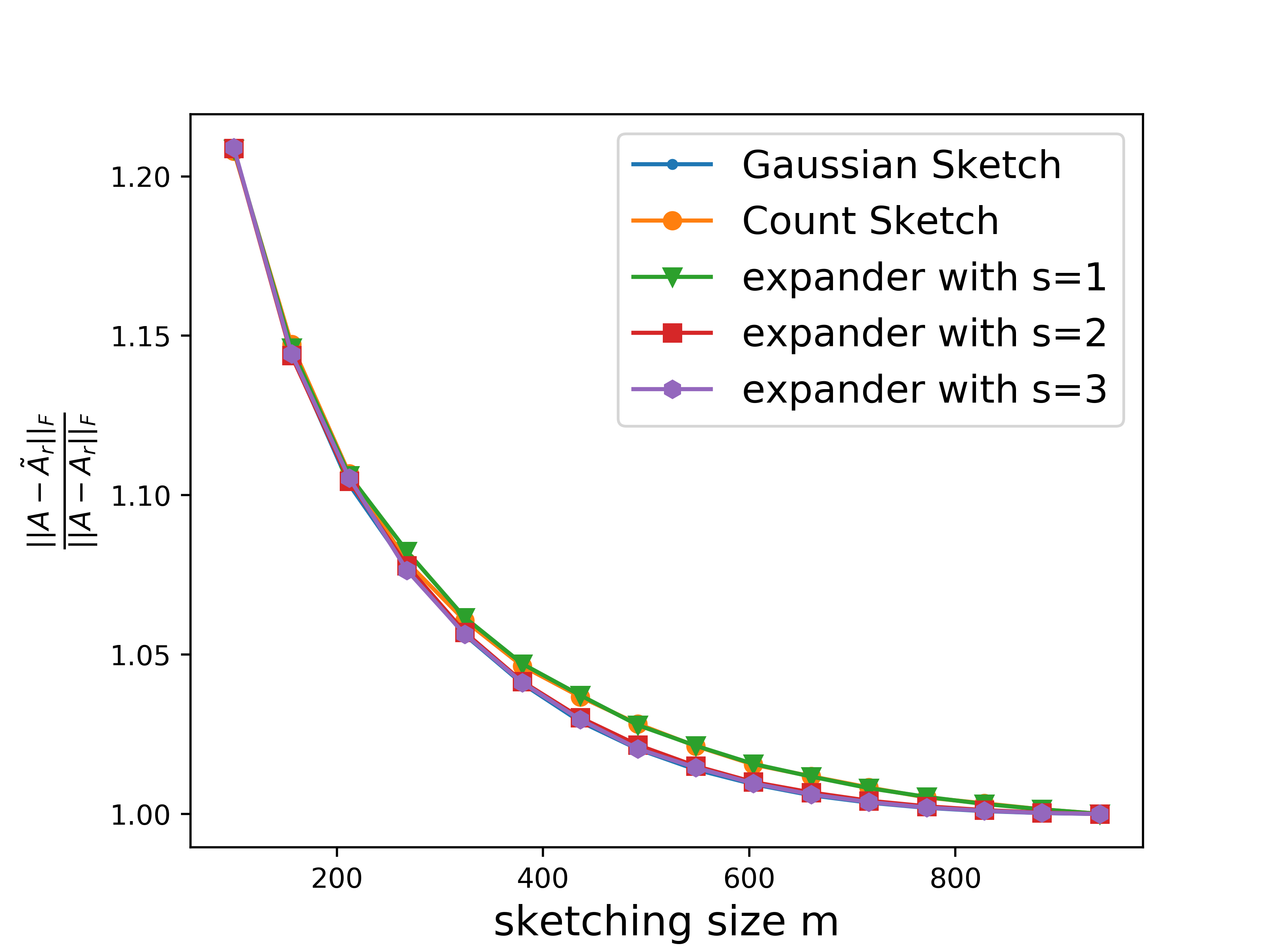}
\includegraphics[width=0.31\textwidth]{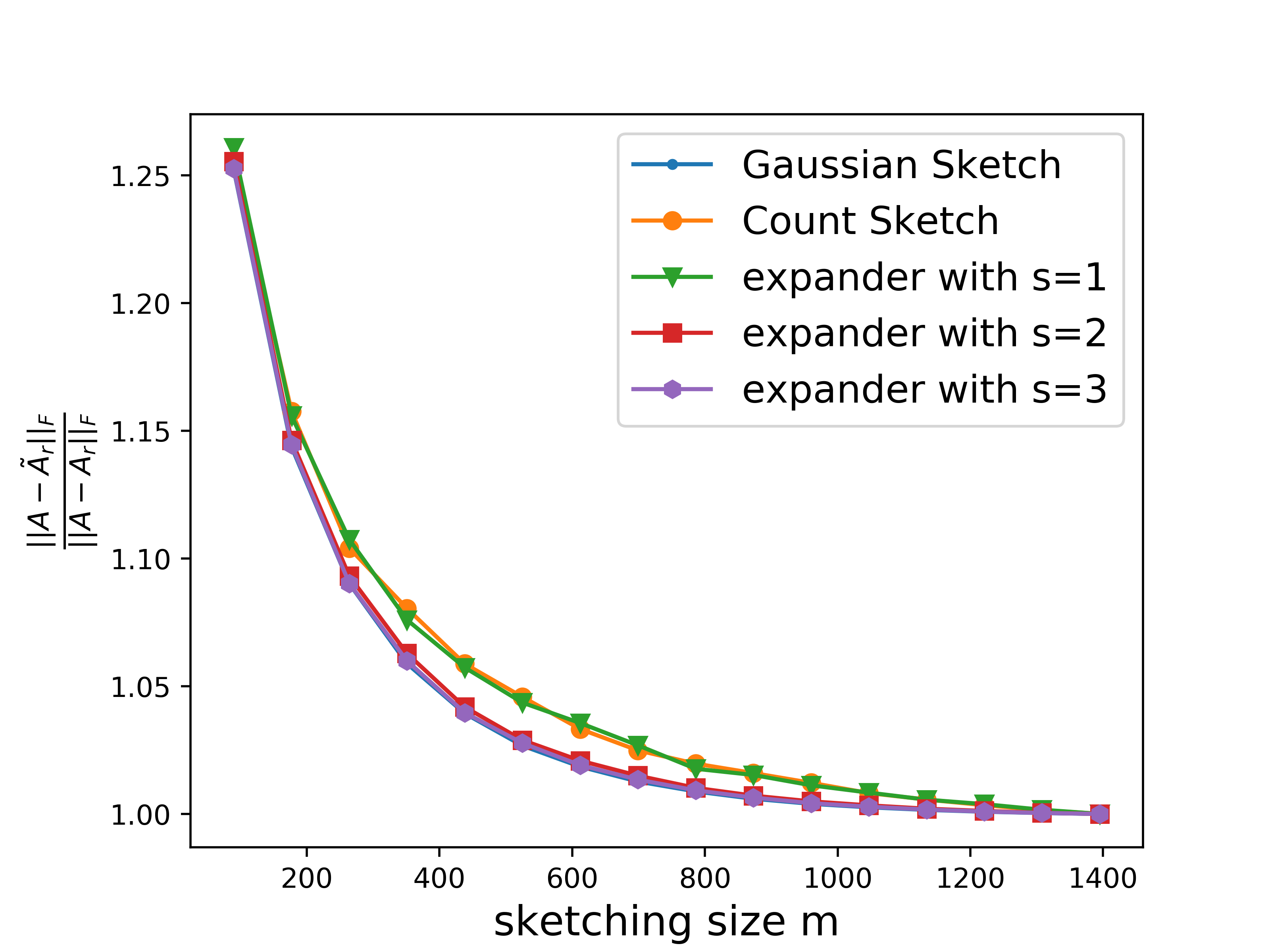}
\includegraphics[width=0.31\textwidth]{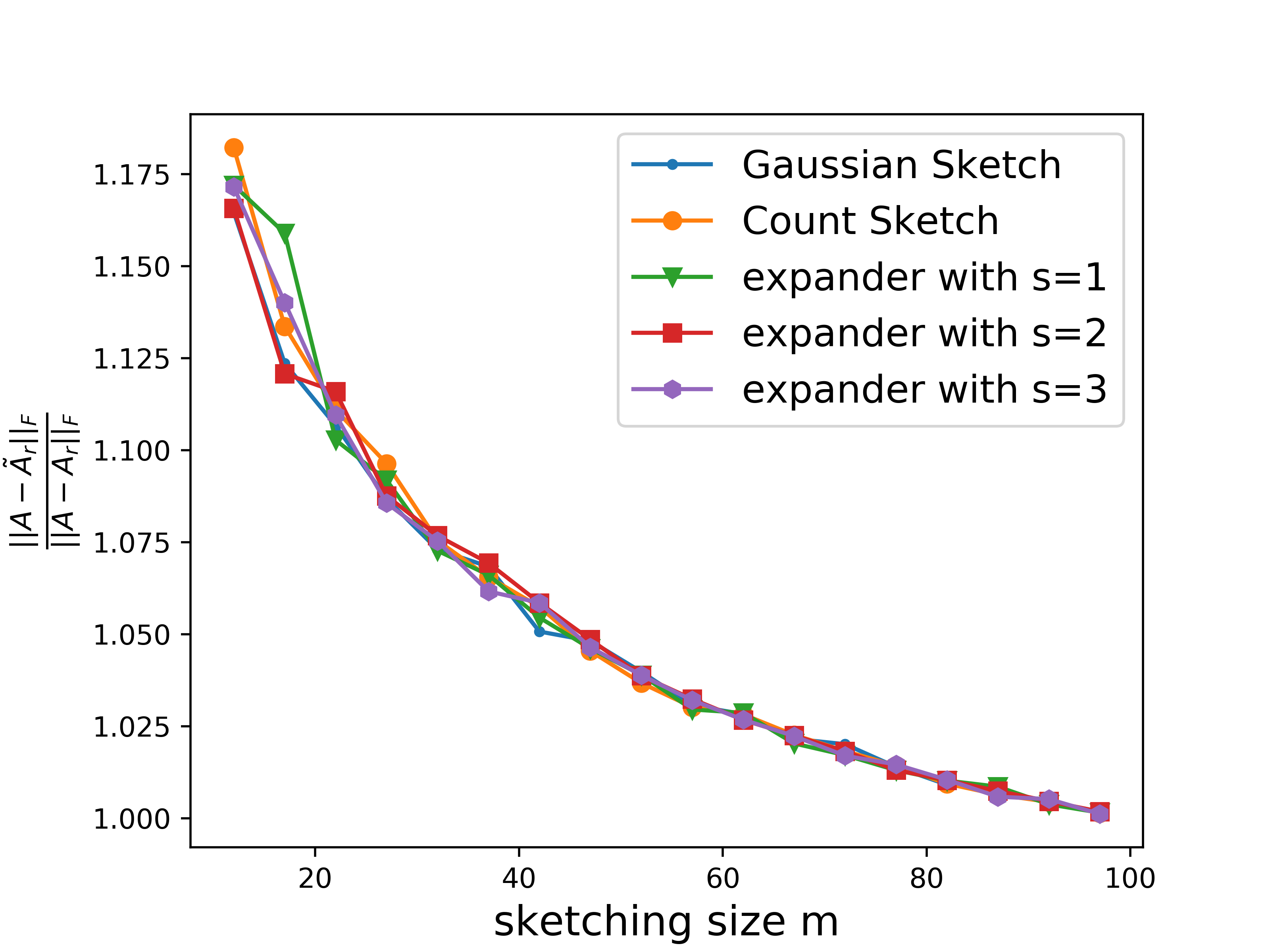}
\vskip-0.1in
\caption{The top three images plot the distortion against sketching size $m$ and the bottom 3 plots are the relative low-rank approximation error against sketching size. The first pair of plots (top and bottom left) are for the Movielens100K dataset, the second pair of plots are for the CRAN (LSI) dataset, and the last pair is for Jester dataset. Results averaged over 10 trials. }\label{fig:main}
\end{figure*}
\section{Numerical Results}\label{sec:results}
In this section, we present numerical results that illustrate the performance of the  expander (random bipartite) graph based sketching  matrices on three real datasets, and compare them with two of the most commonly used sketching matrices, viz., Countsketch~\cite{clarkson2017low} and Gaussian sketch~\cite{woodruff2014sketching}. The three datasets are: Movielens 100k dataset~\cite{harper2010}, which is a sparse matrix with size $1682 \times 943$; Jester dataset~\cite{goldberg2000}, which is a dense dataset with size $7200\times 100$ (both from the matrix completion application); and CRAN~\cite{LSIsaad}, a Latent Semantic Indexing (LSI) dataset, which is sparse with size $5204 \times 1398$. The distortion metric used in our results is defined as:

\begin{definition}[Distortion of $\tilde{\mA}$] The distortion $\eta$ of a sketched matrix $\tilde{\mA} = \mS\mA$ with respect to the original matrix $\mA$ is calculated by $\eta=\left\|\mI-\left(\mA^{{\top}} \mA\right)^{-1 / 2} \tilde{\mA}^{{\top}} \tilde{\mA}\left(\mA^{{\top}} \mA\right)^{-1 / 2}\right\|_{2}$.
\end{definition} 
For further details, we refer to page 3 of ~\cite{magdon2019fast}. Figure~\ref{fig:main} presents numerical results. The performance of the expander graph based matrices is comparable to the popular Countsketch and Gaussian sketch matrices, and the performance slightly improves as we increase the degree $s$. The performance is similar for the low-rank approximation application as well. Note that the magical graph construction  in~\cite{cheung2013fast} is the same as the expander with $s=2$ in our figure. Since SRFT and SRHT also yield similar results on these datasets, we elected to exclude them for visual clarity sake.  
The sketch matrices will have different size $n$ as per the original data matrix.

For the low-rank approximation, we apply the following steps after sketching: WLOG, we sketch the row space of the input matrix $\mA$, and we get $\mY = \mS\mA$, and we compute QR factorization to get $\mQ,\mR={\text qr}(\mY^{\top})$. Next, we construct $\mB=\mA\mQ$, compute  rank-$k$ SVD of $\mB$, $[\mU_k,\mathbf{\Sigma}_k,\mV_k]={\text SVD}(\mB, k)$ and set $\mV_k=\mQ\mV_k$. And then the low-rank approximation is calculated from the formula $\|\mA-\mA\mV_k\mV_k^{\top}\|_F$, and we denote ${\tilde \mA}_k=\mA\mV_k\mV_k^{\top}$ for the $y$ axis in the bottom 3 plots,and $\mA_k$ is the true rank-$k$ approximation.

\subsection*{Conclusions and future work}
In this study, we proposed sparse graph based sketching matrices for fast numerical linear algebra computations. We presented theoretical results that show that under certain conditions,  these matrices satisfy the subspace embedding property. Numerical results illustrated the performance of different sketching matrices in applications. We have made our implementation publicly available as a \emph{Python Matrix-Sketching toolkit}: \url{https://github.com/jurohd/Matrix-Sketching-Toolkits}. As part of our future work, we plan to explore randomized sketching techniques for nonlinear operators, with applications to deep learning.

\subsection*{Acknowledgements} This work was supported by the Rensselaer-IBM AI Research Collaboration (\url{http://airc.rpi.edu}), part of the IBM AI Horizons Network (\url{http://ibm.biz/AIHorizons}).

{\small

}
\end{document}